\documentclass{amsart}

\usepackage{amsmath,amssymb,amsthm,accents}
\usepackage{amscd}
\usepackage[initials,short-journals,non-sorted-cites]{amsrefs}
\usepackage{enumitem}
\usepackage{type1cm}
\usepackage{float}
\usepackage[all]{xy}
\usepackage{amsfonts}
\usepackage[dvipdfmx]{graphicx}
\usepackage{comment}
\usepackage{bm}

\usepackage{color}

\allowdisplaybreaks[4]

\renewcommand{\abstract}[1]{
	\begin{center}
	\parbox{13cm}{\small {\sc Abstract.} #1}
	\end{center}
	\smallskip
}

\newtheorem{theo}{Theorem}[section]
\newtheorem{prop}[theo]{Proposition}
\newtheorem{lemm}[theo]{Lemma}
\newtheorem{cor}[theo]{Corollary}
\newtheorem{prob}[theo]{Problem}

\theoremstyle{definition}

\newtheorem{exam}[theo]{Example}

\newtheorem{remark}[theo]{Remark}

\newcommand{\Z}{\mathbb{Z}}

\newcommand{\R}{\mathbb{R}}

\title[Good involutions of generalized Alexander qundles]{Good involutions of generalized Alexander quandles}
\author{Yuta Taniguchi}
\address{\scriptsize DEPARTMENT OF MATHEMATICS, GRADUATE SCHOOL OF SCIENCE, OSAKA UNIVERSITY, 1-1, MACHIKANEYAMA, TOYONAKA, OSAKA, 560-0043, JAPAN}
\email{yuta.taniguchi.math@gmail.com}

\begin{document}
\keywords{quandle, good involution, symmetric quandle}
\subjclass[2020]{20N02, 57K12.}

\maketitle

\abstract{Quandles with good involutions, which are called symmetric quandles, can be used to define invariants of unoriented knots and links. In this paper, we determine the necessary and sufficient condition for good involutions of a generalized Alexander quandle to exist. Moreover, we classify all good involutions of a connected generalized Alexander quandle up to symmetric quandle isomorphisms.}

\section{Introduction}
A {\it quandle} \cite{Joy,Mat} is a set $X$ with a binary operation $X\times X\to X;(x,y)\mapsto x^y$ satisfying the following three conditions.
\begin{itemize}
\item[(Q1)] For every $x\in X$, $x^x=x$.
\item[(Q2)] There exists a binary operation $X\times X\to X;(x,y)\mapsto x^{y^{-1}}$ such that $(x^y)^{y^{-1}}=(x^{y^{-1}})^y=x$ for every $x,y\in X$.
\item[(Q3)] For every $x,y,z\in X$, $(x^y)^z=(x^z)^{y^z}$.
\end{itemize}
These axioms were motivated from Reidemeister moves in knot theory. In this paper, we denote $(x^y)^z$ by $x^{yz}$. A quandle $X$ is a {\it kei} or an {\it involutory quandle} if for any $x,y\in X$, we have $x^y=x^{y^{-1}}$.

 An involution $\rho$ of a quandle $X$ is a {\it good involution} \cite{Kamada} if for every $x,y\in X$, it holds that $\rho(x^y)=\rho(x)^y$ and $x^{\rho(y)}=x^{y^{-1}}$. When $\rho$ is a good involution of a quandle $X$, we call the pair $(X,\rho)$ a {\it symmetric quandle}~\cite{Kamada-Oshiro}. A typical example of symmetric quandles is a pair $(X,{\rm Id}_X)$ of a kei $X$ and the identity map ${\rm Id}_X:X\to X$ (see \cite{Kamada,Kamada-Oshiro}).

A map $f:(X,\rho)\to(X^\prime,\rho^\prime)$ of symmetric quandles is called a {\it symmetric quandle isomorphism} if $f$ is a bijection, $f(x^y)=f(x)^{f(y)}$ for any $x,y\in X$ and $f\circ\rho=\rho^\prime\circ f$. When there is a symmetric quandle isomorphism between $(X,\rho)$ and $(X^\prime,\rho^\prime)$, we say that $(X,\rho)$ and $(X^\prime,\rho^\prime)$ are {\it symmetric quandle isomorphic}, denoted by $(X,\rho)\cong(X^\prime,\rho^\prime)$.

Given a quandle $X$, let $\mathcal{SQ}(X)$ be the set of all symmetric quandle isomorphism classes of $(X,\rho)$'s, that is,
\[
\mathcal{SQ}(X):=\{ (X,\rho)\mid \rho:\mbox{\rm a good involution of }X\}/\cong.
\]
Then, the following problems naturally arise:
\begin{prob}
\label{prob_1}
Determine the necessary and sufficient condition for good involutions of a quandle $X$ to exist.
\end{prob}
\begin{prob}
\label{prob_2}
Determine $\mathcal{SQ}(X)$ for a quandle $X$.
\end{prob}
In this paper, we give an answer of Problem~\ref{prob_1} when a quandle $X$ is the {\it generalized Alexander quandle} ${\rm GAlex}(G,\varphi)$ for some group $G$ and group automorphism $\varphi:G\to G$.
\begin{theo}
\label{main_theo:1}
Let $G$ be a group and $\varphi:G\to G$ a group automorphism. There exists a good involution of $\mathrm{GAlex}(G,\varphi)$ if and only if the quandle $\mathrm{GAlex}(G,\varphi)$ is a kei. 
\end{theo}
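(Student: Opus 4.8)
The plan is to begin by rewriting the kei condition for $\mathrm{GAlex}(G,\varphi)$ explicitly. Recall that the underlying set of $\mathrm{GAlex}(G,\varphi)$ is $G$, with operation $x^y=\varphi(xy^{-1})y$ and dual operation $x^{y^{-1}}=\varphi^{-1}(xy^{-1})y$. Since $xy^{-1}$ ranges over all of $G$ as $x,y$ do, the identity $x^y=x^{y^{-1}}$ for all $x,y$ is equivalent to $\varphi(g)=\varphi^{-1}(g)$ for all $g\in G$, i.e.\ to $\varphi^{2}=\mathrm{id}_{G}$. So the theorem amounts to: a good involution of $\mathrm{GAlex}(G,\varphi)$ exists if and only if $\varphi^{2}=\mathrm{id}_{G}$.

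The ``if'' direction requires nothing new: when $\mathrm{GAlex}(G,\varphi)$ is a kei, the identity map $\mathrm{Id}$ is a good involution of it, since $(X,\mathrm{Id}_X)$ is a symmetric quandle for every kei $X$, as recalled in the introduction.

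The work is in the ``only if'' direction. Suppose $\rho$ is a good involution of $X=\mathrm{GAlex}(G,\varphi)$; I would show $\varphi^{2}=\mathrm{id}_{G}$ using only the second good-involution axiom $x^{\rho(y)}=x^{y^{-1}}$. Written out in $G$ (and using that $\varphi$ and $\varphi^{-1}$ are homomorphisms), this axiom reads
\[
\varphi(x)\,\varphi(\rho(y))^{-1}\rho(y)=\varphi^{-1}(x)\,\varphi^{-1}(y)^{-1}y
\qquad\text{for all }x,y\in G.
\]
Putting $x=e$ yields $\varphi(\rho(y))^{-1}\rho(y)=\varphi^{-1}(y)^{-1}y$ for every $y\in G$; feeding this back into the displayed identity cancels the common factor $\varphi^{-1}(y)^{-1}y$ on the right and leaves $\varphi(x)=\varphi^{-1}(x)$ for all $x\in G$. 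Hence $\varphi^{2}=\mathrm{id}_{G}$, so $\mathrm{GAlex}(G,\varphi)$ is a kei, as desired.

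In other words, the whole statement reduces to one short algebraic manipulation, and there is no real obstacle here: notably the first good-involution axiom $\rho(x^y)=\rho(x)^y$ plays no role, and the only thing to spot is that specializing the second axiom at $x=e$ already pins down $\varphi$ as an involution. The genuinely substantial part of the paper will instead be Problem~\ref{prob_2} — classifying all good involutions once $\mathrm{GAlex}(G,\varphi)$ is known to be a kei.
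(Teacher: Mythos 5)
Your proof is correct, and it follows a slightly different route from the paper's. For the ``only if'' direction the paper uses \emph{both} good-involution axioms: combining $\rho(x^x)=\rho(x)^x$ with the idempotency axiom (Q1) gives $\varphi(\rho(y)^{-1})\rho(y)=\varphi(y^{-1})y$, hence $x^y=x^{\rho(y)}$, and then the second axiom $x^{\rho(y)}=x^{y^{-1}}$ closes the loop. You instead work entirely inside the second axiom: specializing $x^{\rho(y)}=x^{y^{-1}}$ at $x=e$ gives $\varphi(\rho(y))^{-1}\rho(y)=\varphi^{-1}(y)^{-1}y$, and feeding this back in cancels the $y$-dependent factor and yields $\varphi=\varphi^{-1}$ directly. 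Both arguments are one-line manipulations of the same flavor, but yours is marginally more economical: it never uses $\rho(x^y)=\rho(x)^y$, nor that $\rho$ is an involution, so it actually proves the slightly stronger statement that any map $\rho:G\to G$ satisfying $x^{\rho(y)}=x^{y^{-1}}$ forces $\varphi^2=\mathrm{id}_G$. Your preliminary observation that the kei condition is equivalent to $\varphi^2=\mathrm{id}_G$ is also correct (and is a clean way to phrase the target), and your formula $x^{y^{-1}}=\varphi^{-1}(xy^{-1})y$ is the right one — note that the paper's displayed expression $\varphi^{-1}(x)\varphi^{-1}(y)y$ for the dual operation contains a sign slip that you have silently corrected.
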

Moreover, if a generalized Alexander quandle $\mathrm{GAlex}(G,\varphi)$ is {\it connected}, we determine the set $\mathcal{SQ}(\mathrm{GAlex}(G,\varphi))$ (Theorem~\ref{main_theo:2}).

\section{Good involutions of the generalized Alexander quandle}
Let $G$ be a group and $\varphi:G\to G$ a group automorphism. The {\it generalized Alexander quandle} of $(G,\varphi)$, which is denoted by ${\rm GAlex}(G,\varphi)$, is the set $G$ with the binary operation $x^y:=\varphi(x)\varphi(y^{-1})y$. We note that the operation $x^{y^{-1}}$ is given by $x^{y^{-1}}=\varphi^{-1}(x)\varphi^{-1}(y)y$. If the group $G$ is an additive group, we also call ${\rm GAlex}(\varphi)$ the {\it Alexander quandle} of $(G,\varphi)$. At first, we will prove Theorem~\ref{main_theo:1}.
\begin{proof}[Proof of Theorem~\ref{main_theo:1}]
If the generalized Alexander quandle ${\rm GAlex}(G,\varphi)$ is a kei, then the identity map ${\rm Id}_G:G\to G$ is a good involution. Hence, we will show the only if part.

Let $\rho$ be a good involution of $\mathrm{GAlex}(G,\varphi)$. For any $x\in\mathrm{GAlex}(G,\varphi)$, we have $\rho(x)=\rho(x^ x)=\rho(x)^x=\varphi(\rho(x))\varphi(x^{-1})x$. Thus, it holds that $\varphi(\rho(x)^{-1})\rho(x)=\varphi(x^{-1})x$ for any $x\in\mathrm{GAlex}(G,\varphi)$.

Hence, we see that
\[
x^y=\varphi(x)\varphi(y^{-1})y=\varphi(x)\varphi(\rho(y)^{-1})\rho(y)=x^{\rho(y)}=x^{y^{-1}}
\]
for any $x,y\in\mathrm{GAlex}(G,\varphi)$, which impies that $\mathrm{GAlex}(G,\varphi)$ is a kei.
\end{proof} 
\begin{cor}
\label{cor:generalized_Alexander}
Let $G$ be a group and $\varphi$ a group automorphism of $G$. If ${\rm GAlex}(G,\varphi)$ is not a kei, the set $\mathcal{SQ}({\rm GAlex}(G,\varphi))$ is the empty set.
\end{cor}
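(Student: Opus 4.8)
The plan is to deduce this directly from Theorem~\ref{main_theo:1}, since the corollary is essentially the contrapositive of that theorem packaged together with the definition of $\mathcal{SQ}$. First I would recall that, by definition,
\[
\mathcal{SQ}({\rm GAlex}(G,\varphi))=\{({\rm GAlex}(G,\varphi),\rho)\mid \rho\text{ is a good involution of }{\rm GAlex}(G,\varphi)\}/\cong,
\]
so this set is nonempty if and only if the quandle ${\rm GAlex}(G,\varphi)$ admits at least one good involution.

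Then I would invoke Theorem~\ref{main_theo:1}, which asserts that a good involution of ${\rm GAlex}(G,\varphi)$ exists if and only if ${\rm GAlex}(G,\varphi)$ is a kei. Under the standing hypothesis that ${\rm GAlex}(G,\varphi)$ is not a kei, it follows that no good involution exists, so the collection indexing the quotient displayed above is empty; hence $\mathcal{SQ}({\rm GAlex}(G,\varphi))=\emptyset$.

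Every step here is an immediate appeal either to a definition or to the already established Theorem~\ref{main_theo:1}, so I do not expect any genuine obstacle. The only point meriting a moment's care is to make sure the definition of $\mathcal{SQ}$ is being read off for the quandle ${\rm GAlex}(G,\varphi)$ itself, rather than for some isomorphic presentation of it.
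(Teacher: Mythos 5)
Your proof is correct and takes the same route as the paper, which simply states that the corollary follows immediately from Theorem~\ref{main_theo:1}; you have merely spelled out the (trivial) intermediate step that $\mathcal{SQ}({\rm GAlex}(G,\varphi))$ is nonempty exactly when a good involution exists.
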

\begin{proof}
This corollary follows immediately from Theorem~\ref{main_theo:1}.
\end{proof}
Next, we will discuss good involutions of the generalized Alexander quandle ${\rm GAlex}(G,\varphi)$ when ${\rm GAlex}(G,\varphi)$ is connected. A quandle $X$ is {\it connected} if for any $x,y\in X$, there are elements $y_1,\ldots,y_n\in X$ and $\varepsilon_1,\ldots,\varepsilon_n\in\{\pm 1\}$ such that $x^{y_1^{\varepsilon_1}\cdots y_n^{\varepsilon_n}}=y$.
\begin{prop}
\label{prop:good_inv}
Let $G$ be a group and $\varphi:G\to G$ a group automorphism. If ${\rm GAlex}(G,\varphi)$ is a kei and a connected quandle, there exists a bijection 
\[
\{\rho:\mbox{\rm good involutions of GAlex}(G,\varphi)\}\overset{1:1}{\leftrightarrow}\{r\in G\mid \varphi(r)=r,r^2=e\}.
\]
\end{prop}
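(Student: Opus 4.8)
The plan is to exhibit the bijection explicitly: send a good involution $\rho$ to $\rho(e)\in G$, and send an element $r$ with $\varphi(r)=r$, $r^2=e$ to the left translation $x\mapsto rx$. Almost everything reduces to short computations with the operation $x^y=\varphi(x)\varphi(y)^{-1}y$; the one point where the hypotheses are really used is in showing that, on a connected $\mathrm{GAlex}(G,\varphi)$, a good involution is \emph{forced} to be such a left translation. I would first record that the axiom $\rho(x^y)=\rho(x)^y$ says precisely that $\rho$ commutes with each inner map $S_y\colon x\mapsto x^y$, that $S_e=\varphi$ and $e^y=\varphi(y)^{-1}y$, and that — $\mathrm{GAlex}(G,\varphi)$ being a kei — the second axiom $x^{\rho(y)}=x^{y^{-1}}$ just says $x^{\rho(y)}=x^y$ while each $S_y$ is an involution.

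For the forward direction, let $\rho$ be a good involution and set $r:=\rho(e)$. Since $e^e=e$, the first axiom gives $r=\rho(e)=\rho(e^e)=\rho(e)^e=\varphi(\rho(e))=\varphi(r)$, so $\varphi(r)=r$. Next I would prove $\rho(g)=rg$ for all $g\in G$ by an orbit argument: the set $H:=\{g\in G\mid\rho(g)=rg\}$ contains $e$, and if $g\in H$ then for every $y$ one computes $\rho(g^y)=\rho(g)^y=(rg)^y=\varphi(rg)\varphi(y)^{-1}y=r\varphi(g)\varphi(y)^{-1}y=r(g^y)$, using $\varphi(r)=r$; hence $H$ is closed under every $S_y$, and, $\mathrm{GAlex}(G,\varphi)$ being a kei, under every $S_y^{-1}=S_y$ as well, so connectedness gives $H=G$. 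Then $\rho^2=\mathrm{Id}$ yields $g=\rho(\rho(g))=\rho(rg)=r(rg)=r^2g$ for all $g$, i.e. $r^2=e$. Thus $\rho\mapsto\rho(e)$ is a well-defined map into $\{r\in G\mid\varphi(r)=r,\ r^2=e\}$, and it is injective because $\rho$ is recovered as $x\mapsto rx$.

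For surjectivity, given $r\in G$ with $\varphi(r)=r$ and $r^2=e$, define $\rho_r(x):=rx$. It is a bijection equal to its own inverse, hence an involution; $\rho_r(x^y)=r\varphi(x)\varphi(y)^{-1}y=\varphi(rx)\varphi(y)^{-1}y=\rho_r(x)^y$ by $\varphi(r)=r$; and, using that $\mathrm{GAlex}(G,\varphi)$ is a kei together with $\varphi(r)=r$, $x^{\rho_r(y)}=x^{ry}=\varphi(x)\varphi(y)^{-1}\varphi(r)^{-1}ry=\varphi(x)\varphi(y)^{-1}y=x^y=x^{y^{-1}}$. So $\rho_r$ is a good involution with $\rho_r(e)=r$, and the map $\rho\mapsto\rho(e)$ is onto.

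The only non-formal step is the passage from $\rho(e)$ to all of $\rho$, so the main obstacle is arranging the invariant-set argument so that connectedness applies — the key observations being that $H$ need only be checked closed under the maps $S_y$ (the kei property then handles their inverses) and that $\varphi(r)=r$ is exactly what makes the one-line induction $\rho(g^y)=r(g^y)$ go through. I would also remark that the kei hypothesis on $\mathrm{GAlex}(G,\varphi)$ is genuinely needed for surjectivity: the target set always contains $e$, whereas if $\mathrm{GAlex}(G,\varphi)$ is not a kei it admits no good involution at all.
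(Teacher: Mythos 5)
Your proof is correct and follows essentially the same route as the paper: extract $r=\rho(e)$, show $\varphi(r)=r$, use connectedness to force $\rho(x)=rx$, and verify conversely that each such $r$ yields a good involution. The only cosmetic difference is that you package the connectedness step as an invariant-set argument and invoke the kei property to handle inverse inner maps, where the paper instead computes $(rx)^{y^{-1}}=r(x^{y^{-1}})$ directly and unwinds the explicit chain $x=e^{x_1^{\varepsilon_1}\cdots x_n^{\varepsilon_n}}$.
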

\begin{proof}
Let $\rho:{\rm GAlex}(G,\varphi)\to{\rm GAlex}(G,\varphi)$ be a good involution and $e$ the identity element of $G$. We put $r:=\rho(e)$. Then, it holds that
\[
\varphi(r)=\varphi(\rho(e))=\varphi(\rho(e))\varphi(e^{-1})e=\rho(e)^e=\rho(e^e)=\rho(e)=r.
\]
 This implies that for any $x,y\in{\rm GAlex}(G,\varphi)$, we have
\[
(rx)^y=\varphi(rx)\varphi(y^{-1})y=r\varphi(x)\varphi(y^{-1})y=r(x^y)
\]
and
\[
(rx)^{y^{-1}}=\varphi^{-1}(rx)\varphi^{-1}(y^{-1})y=r\varphi^{-1}(x)\varphi^{-1}(y^{-1})y=r(x^{y^{-1}}).
\]
Since ${\rm GAlex}(G,\varphi)$ is connected, for each $x\in{\rm GAlex}(G,\varphi)$, there exist $x_1,x_2,\ldots,x_n\in{\rm GAlex}(G,\varphi)$ and $\varepsilon_1,\varepsilon_2\ldots,\varepsilon_n\in\{\pm 1\}$ such that $e^{x_1^{\varepsilon_1}x_2^{\varepsilon_2}\cdots x_n^{\varepsilon_n}}=x$. Then, we see that 
\begin{eqnarray*}
rx&=&re^{x_1^{\varepsilon_1}x_2^{\varepsilon_2}\cdots x_n^{\varepsilon_n}}\\
&=&(re)^{x_1^{\varepsilon_1}x_2^{\varepsilon_2}\cdots x_n^{\varepsilon_n}}\\
&=&r^{x_1^{\varepsilon_1}x_2^{\varepsilon_2}\cdots x_n^{\varepsilon_n}}\\
&=&\rho(e)^{x_1^{\varepsilon_1}x_2^{\varepsilon_2}\cdots x_n^{\varepsilon_n}}\\
&=&\rho(e^{x_1^{\varepsilon_1}x_2^{\varepsilon_2}\cdots x_n^{\varepsilon_n}})=\rho(x).
\end{eqnarray*}
 Thus, it holds that $\rho(x)=rx$. Since $\rho$ is an involution, we have $r^2=e$.

Let $r$ be an element of $G$ satisfying that $\varphi(r)=r$ and $r^2=e$. Then, we define a map $\rho_r:{\rm GAlex}(G,\varphi)\to {\rm GAlex}(G,\varphi)$ by $\rho_r(x)=rx$ for any $x\in {\rm GAlex}(G,\varphi)$. We will show that $\rho_r$ is a good involution. It is obvious that $\rho_r$ is an involution. Since $\varphi(r)=r$, for any $x,y\in {\rm GAlex}(G,\varphi)$, we have 
\[
\rho_r(x)^y=(rx)^y=\varphi(rx)\varphi(y^{-1})y=r\varphi(x)\varphi(y^{-1})y=r(x^y)=\rho_r(x^y)
\]
and
\[
x^{\rho_r(y)}=x^{ry}=\varphi(x)\varphi((ry)^{-1})ry=\varphi(x)\varphi(y^{-1}r^{-1})ry=\varphi(x)\varphi(y^{-1})y=x^y=x^{y^{-1}}.
\]
 Thus, the involution $\rho_r$ is a good involution. By the above discussion, we see that the map $r\mapsto\rho_r$ is bijective.
\end{proof}
\begin{remark}
In Proposition~\ref{prop:good_inv}, the assumption that a generalized Alexander quandle is connected is necessary.  For an additive group $G$, we denote the group automorphism $G\to G;g\mapsto -g$ by ${\rm inv}(G)$. We consider the quandle $R_4=(\Z/4\Z,{\rm inv}(\Z/4\Z))$, which is called the {\it dihedral quandle of order 4}. It is known that the quandle $R_4$ is not connected. By \cite{Kamada-Oshiro}, $R_4$ has four good involutions. However, the set $\{ r\in\Z/4\Z\mid {\rm inv}(\Z/4\Z)(r)=r,2r=0\}$ is equal to the set $\{0,2\}\subset\Z/4\Z$. 
\end{remark}
 Let $X$ be a quandle. A map $f:X\to X$ is a {\it quandle automorphism} if it is bijective and $f(x^y)=f(x)^{f(y)}$ for any $x,y\in X$. To determine the set $\mathcal{SQ}({\rm GAlex}(G,\varphi))$, we show the following lemma.
\begin{lemm}[cf. \cite{Higashitani-Kurihara}]
\label{lemm:quandle_auto}
Let $G$ be a group and $\varphi:G\to G$ a group automorphism. Suppose that ${\rm GAlex}(G,\varphi)$ is a connected quandle. Let $f:{\rm GAlex}(G,\varphi)\to{\rm GAlex}(G,\varphi)$ be a quandle automorphism. Then, the map $f_{\#}:{\rm GAlex}(G,\varphi)\to{\rm GAlex}(G,\varphi)$ defined by $f_{\#}(x)=f(x)f(e)^{-1}$ is a group automorphism of $G$.
\end{lemm}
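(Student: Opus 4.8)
The plan is to reduce the statement to pure multiplicativity. Set $a:=f(e)$ and write $f(x)=f_{\#}(x)a$, so that $f_{\#}$ is the composite of the quandle automorphism $f$ with the right translation $z\mapsto za^{-1}$ of the group $G$; hence $f_{\#}$ is automatically a bijection of $G$ with $f_{\#}(e)=e$, and it suffices to prove that $f_{\#}$ is a group homomorphism. To produce a workable identity I would substitute $f(x)=f_{\#}(x)a$ into $f(x^{y})=f(x)^{f(y)}$ and expand both sides with $x^{y}=\varphi(x)\varphi(y^{-1})y$; on the right-hand side the two factors $\varphi(a)$ and $\varphi(a)^{-1}$ coming from the two occurrences of $a$ stand next to each other and cancel, leaving
\[
f_{\#}\bigl(\varphi(x)\varphi(y)^{-1}y\bigr)=\varphi\bigl(f_{\#}(x)\bigr)\,\varphi\bigl(f_{\#}(y)\bigr)^{-1}f_{\#}(y)\qquad(x,y\in G).
\]
It is exactly this cancellation that singles out the normalization $f_{\#}(x)=f(x)f(e)^{-1}$ on the right rather than on the left.

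Next I would exploit special substitutions in this identity. Putting $y=e$ gives $f_{\#}\circ\varphi=\varphi\circ f_{\#}$, and putting $x=e$ gives $f_{\#}(\varphi(y)^{-1}y)=\varphi(f_{\#}(y))^{-1}f_{\#}(y)$ for all $y$. Feeding the latter back into the displayed identity and using that $f_{\#}$ commutes with $\varphi$ yields
\[
f_{\#}(gw)=f_{\#}(g)\,f_{\#}(w)\qquad\text{for all }g\in G\text{ and all }w\in S,
\]
where $S:=\{\varphi(y)^{-1}y\mid y\in G\}$. Taking $g=w^{-1}$ here together with $f_{\#}(e)=e$ shows $f_{\#}(w^{-1})=f_{\#}(w)^{-1}$ for $w\in S$, so the relation also holds for every $w\in S^{-1}$; then a straightforward induction on word length in $S\cup S^{-1}$ upgrades it to $f_{\#}(gw)=f_{\#}(g)f_{\#}(w)$ for all $g\in G$ and all $w$ in the subgroup $\langle S\rangle\le G$.

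It remains to deduce $\langle S\rangle=G$ from connectedness, after which $f_{\#}$ is a bijective group homomorphism $G\to G$, hence a group automorphism, and we are done. The point is that $M:=\langle S\rangle$ is $\varphi$-invariant — one checks $\varphi(S)=S$ (and hence $\varphi^{-1}(S)=S$) directly from the definition of $S$ — contains $e$, and is carried into itself by every right translation $R_{y}$ and its inverse, since $z^{y}=\varphi(z)\cdot(\varphi(y)^{-1}y)$ with $\varphi(y)^{-1}y\in S$ and, dually, $R_{y}^{-1}$ has the form $z\mapsto\varphi^{-1}(z)\cdot s'$ with $s'\in S^{-1}$. Therefore $M$ contains the orbit of $e$ under the inner automorphism group of $\mathrm{GAlex}(G,\varphi)$, and connectedness says this orbit is all of $G$. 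I expect this last step — turning "connected" into "$S$ generates $G$" — to be the main obstacle, along with keeping the roles of $S$ and $S^{-1}$ straight in the inductive bootstrap; the rest is routine computation with the quandle operation.
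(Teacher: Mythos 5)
Your proof is correct, and its first half coincides with the paper's: you pass to $f_{\#}(x)=f(x)f(e)^{-1}$, verify via the cancellation of $\varphi(f(e))\varphi(f(e))^{-1}$ that $f_{\#}$ is a quandle automorphism fixing $e$, and deduce $f_{\#}\circ\varphi=\varphi\circ f_{\#}$ by specializing $y=e$. Where you genuinely diverge is in how connectedness is converted into multiplicativity. You isolate the set $S=\{\varphi(y)^{-1}y\mid y\in G\}$, read off $f_{\#}(gw)=f_{\#}(g)f_{\#}(w)$ for $w\in S$ from the quandle identity, bootstrap to $S^{-1}$ and then to the subgroup $\langle S\rangle$ by induction on word length, and finally show $\langle S\rangle=G$ because $\langle S\rangle$ contains $e$ and is stable under every $R_y^{\pm1}$, so it contains the orbit of $e$, which connectedness makes all of $G$. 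The paper instead writes $y=e^{y_1^{\varepsilon_1}\cdots y_n^{\varepsilon_n}}$, prepends copies of $e^{\pm1}$ to arrange $\sum_{i=1}^n\varepsilon_i=0$, proves $x^{y_1^{\varepsilon_1}\cdots y_n^{\varepsilon_n}}=\varphi^{\sum\varepsilon_i}(x)\,e^{y_1^{\varepsilon_1}\cdots y_n^{\varepsilon_n}}=xy$, and applies $f_{\#}$ to both sides. The two arguments encode the same underlying fact (the orbit of $e$ is built from elements of $S\cup S^{-1}$), but yours makes the group-theoretic content explicit and reusable --- connectedness forces $\{\varphi(y)^{-1}y\}$ to generate $G$ --- at the cost of the $S$-versus-$S^{-1}$ bookkeeping and the word-length induction, whereas the paper's exponent-sum normalization yields a shorter direct computation with no need to discuss generation. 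A small point in your favor: your formula for $R_y^{-1}$, namely $z\mapsto\varphi^{-1}(z)\cdot u^{-1}\varphi(u)$ with $u=\varphi^{-1}(y)$, is the correct one (the paper's displayed expression for $x^{y^{-1}}$ omits an inverse), and placing that factor in $S^{-1}$ is exactly what your invariance argument needs.
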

\begin{proof}
Let $f:{\rm GAlex}(G,\varphi)\to{\rm GAlex}(G,\varphi)$ be a quandle automorphism. At first, we will show that the map $f_{\#}:{\rm GAlex}(G,\varphi)\to{\rm GAlex}(G,\varphi);x\mapsto f(x)f(e)^{-1}$ is also a quandle automorphism. For any $x,y\in {\rm GAlex}(G,\varphi)$, we have
\begin{eqnarray*}
f_{\#}(x)^{f_{\#}(y)}&=&(f(x)f(e)^{-1})^{f(y)f(e)^{-1}}\\
&=&\varphi(f(x)f(e)^{-1})\varphi((f(y)f(e)^{-1})^{-1})f(y)f(e)^{-1}\\
&=&\varphi(f(x)f(e)^{-1})\varphi(f(e)f(y)^{-1})f(y)f(e)^{-1}\\
&=&\varphi(f(x))\varphi(f(y)^{-1})f(y)f(e)^{-1}\\
&=&(f(x)^{f(y)})f(e)^{-1}=f(x^y)f(e)^{-1}=f_\#(x^y).
\end{eqnarray*}
Hence, the map $f_\#$ is a quandle automorphism. Notice that $f_\#(e)=e$ and $f_\#$ is a bijective. Since $\varphi(x)=x^e$ for any $x\in{\rm GAlex}(G,\varphi)$, it holds that 
\[
f_\#(\varphi(x))=f_{\#}(x^e)=f_\#(x)^{f_\#(e)}=f_\#(x)^{e}=\varphi(f_\#(x))
\]
for any $x\in {\rm GAlex}(G,\varphi)$.

Let $x$ and $y$ be elements of ${\rm GAlex}(G,\varphi)$. Since ${\rm GAlex}(G,\varphi)$ is connected, there exist $y_1,\ldots,y_n\in{\rm GAlex}(G,\varphi)$ and $\varepsilon_1,\ldots,\varepsilon_n\in\{\pm 1\}$ such that $y=e^{y_1^{\varepsilon_1}\cdots y_n^{\varepsilon_n}}$.  Using the relation $e^{e^{\pm 1}}=e$, we may assume that $\sum^n_{i=1}\varepsilon_i=0$. 

For each $z\in{\rm GAlex}(G,\varphi)$, we see that 
\[
\varphi(x)e^z=\varphi(x)\varphi(e)\varphi(z^{-1})z=\varphi(x)\varphi(z^{-1})z=x^z
\]
and
\[
\varphi^{-1}(x)e^{z^{-1}}=\varphi^{-1}(x)\varphi^{-1}(e)\varphi^{-1}(z^{-1})z=\varphi^{-1}(x)\varphi^{-1}(z^{-1})z=x^{z^{-1}}.
\]
Thus, we have
\begin{eqnarray*}
x^{y_1^{\varepsilon_1}\cdots y_{n-2}^{\varepsilon_{n-2}}y_{n-1}^{\varepsilon_{n-1}}y_n^{\varepsilon_n}}&=&\varphi^{\varepsilon_n}(x^{y_1^{\varepsilon_1}\cdots y_{n-2}^{\varepsilon_{n-2}}y_{n-1}^{\varepsilon_{n-1}}})e^{y_n^{\varepsilon_n}}\\
&=&\varphi^{\varepsilon_n}(\varphi^{\varepsilon_{n-1}}(x^{y_1^{\varepsilon_1}\cdots y_{n-2}^{\varepsilon_{n-2}}})e^{y_{n-1}^{\varepsilon_{n-1}}})e^{y_n^{\varepsilon_n}}\\
&=&\varphi^{\varepsilon_{n-1}+\varepsilon_n}(x^{y_1^{\varepsilon_1}\cdots y_{n-2}^{\varepsilon_{n-2}}})\varphi^{\varepsilon_n}(e^{y_{n-1}^{\varepsilon_{n-1}}})e^{y_n^{\varepsilon_n}}\\
&=&\varphi^{\varepsilon_{n-1}+\varepsilon_n}(x^{y_1^{\varepsilon_1}\cdots y_{n-2}^{\varepsilon_{n-2}}})e^{y_{n-1}^{\varepsilon_{n-1}}y_n^{\varepsilon_n}}\\
&=&\varphi^{\sum^n_{i=1}\varepsilon_i}(x)e^{y_1^{\varepsilon_1}\cdots y_n^{\varepsilon_n}}=xe^{y_1^{\varepsilon_1}\cdots y_n^{\varepsilon_n}}=xy.
\end{eqnarray*}
Hence, it holds that 
\begin{eqnarray*}
f_{\#}(xy)&=&f_{\#}(x^{y_1^{\varepsilon_1}\cdots y_n^{\varepsilon_n}})\\
&=&f_{\#}(x)^{f_{\#}(y_1)^{\varepsilon_1}\cdots f_{\#}(y_n)^{\varepsilon_n}}\\
&=&f_{\#}(x)e^{f_{\#}(y_1)^{\varepsilon_1}\cdots f_{\#}(y_n)^{\varepsilon_n}}\\
&=&f_{\#}(x)f_{\#}(e)^{f_{\#}(y_1)^{\varepsilon_1}\cdots f_{\#}(y_n)^{\varepsilon_n}}\\
&=&f_{\#}(x)f_{\#}(e^{y_1^{\varepsilon_1}\cdots y_n^{\varepsilon_n}})=f_{\#}(x)f_{\#}(y).
\end{eqnarray*}
 This implies that $f_{\#}$ is a group automorphism.
\end{proof}
Let $f:{\rm GAlex}(G,\varphi)\to{\rm GAlex}(G,\varphi)$ be a quandle automorphism and ${\rm Aut}(G)$ the automorphism group of $G$. By the proof of Lemma~\ref{lemm:quandle_auto}, the group automorphism $f_{\#}:G\to G;x\mapsto f(x)f(e)^{-1}$ satisfies that $\varphi\circ f_{\#}=f_\#\circ\varphi$. Thus, we define the subgroup $C_{{\rm Aut}(G)}(\varphi)$ by $C_{{\rm Aut}(G)}(\varphi):=\{ \psi\in{\rm Aut}(G)\mid \psi\circ\varphi=\varphi\circ\psi\}$. It is easily seen that the subset $\{ r\in G\mid \varphi(r)=r,r^2=e\}$ is closed under the action of $C_{{\rm Aut}(G)}(\varphi)$. 
\begin{theo}
\label{main_theo:2}
Let $G$ be a group and $\varphi:G\to G$ a group automorphism. Suppose that ${\rm GAlex}(G,\varphi)$ is a kei and a connected quandle. Then, there exists a bijection 
\[
\mathcal{SQ}({\rm GAlex}(G,\varphi))\overset{1:1}{\leftrightarrow}\{r\in G\mid \varphi(r)=r,r^2=e\}/C_{{\rm Aut}(G)}(\varphi).
\]
\end{theo}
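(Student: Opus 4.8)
The plan is to combine Proposition~\ref{prop:good_inv} with the analysis of quandle automorphisms in Lemma~\ref{lemm:quandle_auto}. Write $S:=\{r\in G\mid \varphi(r)=r,\ r^2=e\}$. By Proposition~\ref{prop:good_inv} every good involution of ${\rm GAlex}(G,\varphi)$ is of the form $\rho_r\colon x\mapsto rx$ for a unique $r\in S$, so the assignment $\Phi\colon r\mapsto [({\rm GAlex}(G,\varphi),\rho_r)]$ defines a surjection from $S$ onto $\mathcal{SQ}({\rm GAlex}(G,\varphi))$. It therefore suffices to show that, for $r,r'\in S$, one has $\Phi(r)=\Phi(r')$ if and only if $r$ and $r'$ lie in the same $C_{{\rm Aut}(G)}(\varphi)$-orbit; $\Phi$ then descends to the asserted bijection on orbit sets.

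For the ``if'' direction I would first check that every $\psi\in C_{{\rm Aut}(G)}(\varphi)$ is itself a quandle automorphism of ${\rm GAlex}(G,\varphi)$: since $\psi$ is a group automorphism commuting with $\varphi$, a one-line computation gives $\psi(x^y)=\psi(\varphi(x)\varphi(y^{-1})y)=\varphi(\psi(x))\varphi(\psi(y)^{-1})\psi(y)=\psi(x)^{\psi(y)}$. Moreover $\psi(\rho_r(x))=\psi(rx)=\psi(r)\psi(x)=\rho_{\psi(r)}(\psi(x))$, and $\psi(r)\in S$ because $\varphi(\psi(r))=\psi(\varphi(r))=\psi(r)$ and $\psi(r)^2=\psi(r^2)=e$. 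Hence $\psi$ is a symmetric quandle isomorphism $({\rm GAlex}(G,\varphi),\rho_r)\to({\rm GAlex}(G,\varphi),\rho_{\psi(r)})$, so $\Phi(r)=\Phi(\psi(r))$, and $\Phi$ is constant on $C_{{\rm Aut}(G)}(\varphi)$-orbits.

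For the ``only if'' direction, suppose $f\colon({\rm GAlex}(G,\varphi),\rho_r)\to({\rm GAlex}(G,\varphi),\rho_{r'})$ is a symmetric quandle isomorphism. By Lemma~\ref{lemm:quandle_auto} and the discussion following it, the map $f_{\#}\colon x\mapsto f(x)f(e)^{-1}$ is a group automorphism of $G$ lying in $C_{{\rm Aut}(G)}(\varphi)$, and by definition $f(x)=f_{\#}(x)f(e)$. The compatibility $f\circ\rho_r=\rho_{r'}\circ f$ says $f(rx)=r'f(x)$ for all $x$; substituting $f(x)=f_{\#}(x)f(e)$ and using that $f_{\#}$ is a homomorphism yields $f_{\#}(r)f_{\#}(x)f(e)=r'f_{\#}(x)f(e)$ for all $x$, and evaluating at $x=e$ gives $f_{\#}(r)=r'$. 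Thus $r$ and $r'$ lie in the same $C_{{\rm Aut}(G)}(\varphi)$-orbit. Together with the previous paragraph and the surjectivity of $\Phi$, this shows that $\Phi$ induces a bijection from $S/C_{{\rm Aut}(G)}(\varphi)$ onto $\mathcal{SQ}({\rm GAlex}(G,\varphi))$, which is the statement.

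I do not expect a genuine obstacle here: the one place where real content enters is the invocation of Lemma~\ref{lemm:quandle_auto}, which uses connectedness to force every quandle automorphism, after renormalizing it to fix the identity, to be a group automorphism commuting with $\varphi$. That is exactly what reduces an arbitrary symmetric quandle isomorphism to the $C_{{\rm Aut}(G)}(\varphi)$-action on the distinguished element $r=\rho(e)$; the remaining steps are routine manipulations of left and right translations in $G$.
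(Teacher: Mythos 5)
Your proposal is correct and follows essentially the same route as the paper: Proposition~\ref{prop:good_inv} gives the parametrization of good involutions by $r\in S$, the action of $C_{{\rm Aut}(G)}(\varphi)$ realizes symmetric quandle isomorphisms in one direction, and Lemma~\ref{lemm:quandle_auto} reduces an arbitrary symmetric quandle isomorphism to $f_{\#}(r)=r'$ in the other. Your substitution $f(x)=f_{\#}(x)f(e)$ is the intended computation (and in fact corrects a small sign slip in the paper, which writes $f(e)^{-1}$ where $f(e)$ is meant).
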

\begin{proof}
Let $r_1$ and $r_2$ be elements of $\{r\in G\mid \varphi(r)=r,r^2=e\}$. Assume that there is an element $f\in C_{{\rm Aut}(G)}(\varphi)$ such that $f(r_1)=r_2$. Notice that $f$ is a quandle automorphism of ${\rm GAlex}(G,\varphi)$. Let $\rho_{r_1}$ and $\rho_{r_2}$ be good involutions of ${\rm GAlex}(G,\varphi)$ defined by $\rho_{r_i}(x)=r_ix~(i=1,2)$. It holds that 
\[
f(\rho_{r_1}(x))=f(r_1x)=f(r_1)f(x)=r_2f(x)=\rho_{r_2}(f(x))
\]
for any $x\in {\rm GAlex}(G,\varphi)$. Thus, the symmetric quandles $( {\rm GAlex}(G,\varphi),\rho_{r_1})$ and $( {\rm GAlex}(G,\varphi),\rho_{r_2})$ are symmetric quandle isomorphic.

Let $\rho_1$ and $\rho_2$ be good involutions of ${\rm GAlex}(G,\varphi)$. Suppose that there is a symmetric quandle isomorphism from $({\rm GAlex}(G,\varphi),\rho_{r_1})$ to $( {\rm GAlex}(G,\varphi),\rho_{r_2})$, which is denoted by $f$. Since $f$ is a quandle automorphism of ${\rm GAlex}(G,\varphi)$, the map $f_{\#}:G\to G;x\mapsto f(x)f(e)^{-1}$ is an element of $C_{{\rm Aut}_G}(\varphi)$ (Lemma~\ref{lemm:quandle_auto}). Putting $r_1:=\rho_1(e)$ and $r_2:=\rho_2(e)$, we have $\rho_1(x)=r_1x$ and $\rho_2(x)=r_2x$. Then, for any $x\in {\rm GAlex}(G,\varphi)$, it holds that 
\[
f(\rho_1(x))=f_\#(r_1x)f(e)^{-1}=f_\#(r_1)f_\#(x)f(e)^{-1}
\] and 
\[
\rho_2(f(x))=r_2f_\#(x)f(e)^{-1}.\]
 Since $f\circ\rho_1=\rho_2\circ f$, we have $f_\#(r_1)=r_2$. Thus, we see that the bijection constructed in Proposition~\ref{prop:good_inv} induces a biijection between $\mathcal{SQ}({\rm GAlex}(G,\varphi))$ and $\{r\in G\mid \varphi(r)=r,r^2=e\}/C_{{\rm Aut}(G)}(\varphi)$.
\end{proof}
\begin{exam}
Let $n$ be a positive integer. We regard $S^1$ as $\R/\Z$, and denote $(S^1)^n$ by $T_n$. Let us consider good involutions of the Alexander quandle $X={\rm GAlex}(T_n,{\rm inv}(T_n))$. We note that the quandle $X$ is a kei and a connected quandle. Recall that ${\rm inv}(T_n)(x)=-x$ for any $x\in T_n$. Hence, we see that
\begin{eqnarray*}
&&\{r\in T_n\mid {\rm inv}(T_n)(r)=r,2r=0\}\\
&&=\{(x_1,\ldots,x_n)\in T_n\mid x_i\in\{0,1/2\}\ (i=1,\ldots,n) \}.
\end{eqnarray*}
By Proposition~\ref{prop:good_inv}, the quandle $X$ has $2^n$ good involutions.

Next, let us consider the set $\mathcal{SQ}(X)$. We define the map $E_{ij}:T_n\to T_n$ by
\[
E_{ij}(x_1,\ldots,x_i,\ldots,x_j,\ldots,x_n)=(x_1,\ldots,x_i+x_j,\ldots,x_j,\ldots,x_n).
\] 
Then, we see that $E_{ij}$ is an element of $C_{{\rm Aut}(T_n)}({\rm inv}(T_n))$. Let $H$ be the subgroup of $C_{{\rm Aut}(T_n)}({\rm inv}(T_n))$ generated by $\{E_{ij}\mid i\neq j\}$. It is not difficult to check that the following equality holds:
\begin{eqnarray*}
&&\{\psi(1/2,0,\ldots,0)\in T_n\mid \psi\in H\}\\
&&=\{(x_1,\ldots,x_n)\in T_n\mid x_i\in\{0,1/2\}\ (i=1,\ldots,n) \}\backslash\{(0,\ldots,0)\}.
\end{eqnarray*}
Since an element of $C_{{\rm Aut}(T_n)}({\rm inv}(T_n))$ is a group automorphism, the orbit of $(0,\ldots,0)$ is equal to $\{(0,\ldots,0)\}$. 

Hence, the set $\{r\in T_n\mid {\rm inv}(T_n)(r)=r,2r=0\}/C_{{\rm Aut}(T_n)}({\rm inv}(T_n))$ consists of 2 elements. By Theorem~\ref{main_theo:2}, the cardinality of $\mathcal{SQ}(X)$ is equal to $2$.
\end{exam}
\section*{Acknowledgements}
The author would like to thank Seiichi Kamada for his several comments. This work was supported by JSPS KAKENHI Grant Number 21J21482.

\end{document}